\documentclass[draft]{amsart}
\usepackage{amsmath}
\usepackage{amssymb}
\usepackage{graphicx}
\newtheorem{theorem}{Theorem}[section]
\newtheorem{lemma}[theorem]{Lemma}

\theoremstyle{definition}

\newtheorem{example}[theorem]{Example}

\newtheorem{remark}[theorem]{Remark}

\numberwithin{equation}{section}

\begin{document}

\title[A note on the quantization error for ISMs]{A note on the quantization error for in-homogeneous self-similar measures}

\author{Sanguo Zhu}
\address{School of Mathematics and Physics, Jiangsu University
of Technology\\ Changzhou 213001, China.}
\email{sgzhu@jsut.edu.cn}

\subjclass[2000]{Primary 28A80, 28A78; Secondary 94A15}
\keywords {quantization error, convergence order, open set condition, in-homogeneous self-similar measures.}

\begin{abstract}
We further study the asymptotics of quantization errors for two classes of in-homogeneous self-similar measures $\mu$. We give a new sufficient condition for the upper quantization coefficient for $\mu$ to be finite. This, together with our previous work, leads to a necessary and sufficient condition for the upper and lower quantization coefficient of $\mu$ to be both positive and finite. Furthermore, we determine (estimate) the convergence order of the quantization error in case that the quantization coefficient is infinite.
\end{abstract}

\maketitle

\section{Introduction}

Let $(f_i)_{i=1}^N$ be a family of contractive similitudes on $\mathbb{R}^q$. By \cite{Hut:81}, there exists a unique non-empty compact set $E$ satisfying
\[
E=f_1(E)\cup\cdots\cup f_N(E).
\]
This set is called the self-similar set associated with $(f_i)_{i=1}^N$. Given a probability $(q_i)_{i=1}^N$, there exists a unique Borel probability measure $P$ supported on $E$ with
\begin{eqnarray}\label{ssm}
P=\sum_{i=1}^Nq_iP\circ f_i^{-1}.
\end{eqnarray}
The measure $P$ is called the self-similar measure associated with $(f_i)_{i=1}^N$ and the probability vector $(q_i)_{i=1}^N$.

Let $\nu$ be a Borel probability measure on $\mathbb{R}^q$ with compact support  $C$ and $(p_i)_{i=0}^N$ a probability vector with $p_i>0$ for all $0\leq i\leq N$. By \cite{Bar:88,Olsen:08}, there exists a unique a Borel probability measure $\mu$ satisfying
\begin{eqnarray}\label{s55}
\mu=p_0\nu+\sum_{i=1}^Np_i\mu\circ f_i^{-1}.
\end{eqnarray}
We call the measure $\mu$ the in-homogeneous self-similar measure (ISM) associated with $(f_i)_{i=1}^N,(p_i)_{i=0}^N$ and $\nu$. The support $K$ of $\mu$ is the unique nonempty compact set satisfying
\begin{eqnarray}\label{s15}
K=C\cup f_1(K)\cup\cdots\cup f_N(K).
\end{eqnarray}
Without loss of generality, we always assume that the diameter of $K$ equals $1$. We further consider the following two disjoint classes of ISMs.

\emph{Case (i)}: Let $(f_i)_{i=1}^N$ satisfy the open set condition (OSC), namely, there exists a non-empty bounded open set $U$ such that
$f_i(U),1\leq i\leq N$, are pairwise disjoint and $\bigcup_{i=1}^Nf_i(U)\subset U$. Let $\nu$  a self-similar measure associated with $(f_i)_{i=1}^N$ and a probability vector $(t_i)_{i=1}^N$ with $t_i>0$  for all $1\leq i\leq N$. Then $C=E$; by (\ref{s15}), we have that $K=E$.

\emph{Case (ii)}: Let $(g_i)_{i=1}^M$ be a family of contractive similitudes satisfying the OSC with contraction ratios $(c_i)_{i=1}^M$. Let $\nu$ be the self-similar measure associated with $(g_i)_{i=1}^M$ and a probability vector $(t_i)_{i=1}^M$ with $t_i>0$  for all $1\leq i\leq M$. Let ${\rm cl}(A),\partial(A)$ and ${\rm int}(A)$ respectively denote the closure, boundary and interior in $\mathbb{R}^q$ of a set $A$. We assume a modified version of the in-homogeneous open set condition (IOSC) proposed in \cite{Olsen:08}: there exists a bounded non-empty open set $U$ such that
\begin{itemize}
\item[\rm (1)] $\bigcup_{i=1}^Nf_i(U)\subset U$ and $f_i(U)\cap f_j(U)=\emptyset,1\leq i\neq j\leq N$;

\item[\rm (2)] $E\cap U\neq\emptyset$ and $C\subset U$;

\item[\rm (3)] $\nu(\partial(U))=0$; $C\cap f_i({\rm cl}(U))=\emptyset$ for all $1\leq i\leq N$.
\end{itemize}

Let $(s_i)_{i=1}^N$ be the contraction ratios of $(f_i)_{i=1}^N$.
 For Case (i), we define two positive numbers $\xi_{1,r},\xi_{2,r}$  by
 \[
 \sum_{i=1}^N(t_is_i^r)^{\frac{\xi_{1,r}}{\xi_{1,r}+r}}=1;\;\;\sum_{i=1}^N(p_is_i^r)^{\frac{\xi_{2,r}}{\xi_{2,r}+r}}=1.
 \]
 As no confusion could arise, we define for Case (ii), two positive numbers, which we still denote by $\xi_{1,r},\xi_{2,r}$, by
 \[
 \sum_{i=1}^M(t_ic_i^r)^{\frac{\xi_{1,r}}{\xi_{1,r}+r}}=1;\;\sum_{i=1}^N(p_is_i^r)^{\frac{\xi_{2,r}}{\xi_{2,r}+r}}=1.
 \]

In the present paper, we will further examine the finiteness of the upper quantization coefficient for $\mu$. We refer to \cite{GL:00} for mathematical foundations of quantization theory and \cite{GN:98} for its deep background in information theory. One may see \cite{GL:01,GL:04,GL:05,PK:01} for more related results.

For a Borel probability measure $P$, the $s$-dimensional upper and lower quantization coefficient are defined by
\begin{eqnarray*}
\overline{Q}_r^s(P):=\limsup_{n\to\infty}n^{\frac{r}{s}}e_{n,r}^r(P),\;\;
\underline{Q}_r^s(P):=\liminf_{n\to\infty}n^{\frac{r}{s}}e^r_{n,r}(P),
\end{eqnarray*}
where $e_{n,r}(P)$ is the error in the approximation of $P$ with discrete probability measures supported on at most $n$ points in the sense of $L_r$-metrics. Set
\[
\mathcal{D}_{k}:=\{\alpha\subset\mathbb{R}^{q}:1\leq{\rm card}(\alpha)\leq k\},\;k\geq 1.
\]
Then by \cite{GL:00}, we have the following equivalent definition for $e_{n,r}(P)$:
\[
e_{n,r}(P):=\inf_{\alpha\in\mathcal{D}_n}\bigg(\int d(x,\alpha)^rdP(x)\bigg)^{\frac{1}{r}}.
\]
By \cite[Theorem 4.12]{GL:00}, $e_{n,r}(P)$ is strictly decreasing with respect to $n$ provided that
\[
{\rm card}({\rm supp}(P))=\infty\;{\rm and}\;\int|x|^rdP(x)<\infty.
\]

The upper (lower) quantization dimension $\overline{D}_r(P)$ ($\underline{D}_r(P)$) for $P$ of order $r$ is exactly the critical point at which the upper (lower) quantization coefficient jumps from zero to infinity (cf. \cite{GL:00,PK:01}):
\begin{eqnarray*}
\overline{D}_r(P):=\limsup_{n\to\infty}\frac{\log n}{-\log e_{n,r}(P)},\;\underline{D}_r(P):=\liminf_{n\to\infty}\frac{\log n}{-\log e_{n,r}(P)}.
\end{eqnarray*}
Both the upper (lower) dimension and the upper (lower) quantization coefficient are characterizations of the asymptotic properties of the quantization errors, while the latter provides us with more accurate information.

 Although the supports and mass distributions of the ISMs in the above two cases are completely different (see (\ref{s12}) and (\ref{gg2})), these ISMs share many properties concerning the asymptotic quantization errors. As is proved in \cite{zhu:14,zhu:13}, for an ISM in Case (i) or (ii), we have
\begin{eqnarray}
 \underline{D}_r(\mu)=\overline{D}_r(\mu)=\xi_r:=\max\{\xi_{1,r},\xi_{2,r}\};\label{gg3}\\
 \underline{Q}_r^{\xi_r}(\mu)>0;\;\;\overline{Q}_r^{\xi_r}(\mu)<\infty\;{\rm if}\;\xi_{1,r}>\xi_{2,r}\label{gg4}.
\end{eqnarray}
It was left open whether the $\overline{Q}_r^{\xi_r}(\mu)<\infty$ when $\xi_{1,r}<\xi_{2,r}$. We will prove

\begin{theorem}\label{mthm2}
Let $\mu$ be an ISM in Case (i) or (ii). Then $\overline{Q}_r^{\xi_r}(\mu)<\infty$ if $\xi_{1,r}<\xi_{2,r}$.
\end{theorem}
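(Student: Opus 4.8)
Since the hypothesis $\xi_{1,r}<\xi_{2,r}$ gives $\xi_r=\max\{\xi_{1,r},\xi_{2,r}\}=\xi_{2,r}$, it suffices to produce a constant $c$ with $e_{n,r}^r(\mu)\le c\,n^{-r/\xi_{2,r}}$ for all large $n$. The plan is to exhaust the self-similar part of (\ref{s55}) down a well-chosen finite maximal antichain of the coding tree $\Omega^*:=\bigcup_{k\ge 0}\{1,\dots,N\}^k$. For such a $\Gamma$ write $\Gamma^*$ for the set of proper prefixes of words of $\Gamma$, and for $\sigma=\sigma_1\cdots\sigma_k$ put $\sigma^-=\sigma_1\cdots\sigma_{k-1}$, $f_\sigma=f_{\sigma_1}\circ\cdots\circ f_{\sigma_k}$, with $p_\sigma,s_\sigma$ the corresponding products ($p_\emptyset=s_\emptyset=1$). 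Iterating (\ref{s55}) along $\Gamma$ gives $\mu=\sum_{\tau\in\Gamma^*}p_0p_\tau\,\nu\circ f_\tau^{-1}+\sum_{\sigma\in\Gamma}p_\sigma\,\mu\circ f_\sigma^{-1}$. Assigning $n_\tau$ points to $\nu\circ f_\tau^{-1}$ and $m_\sigma$ points to $\mu\circ f_\sigma^{-1}$ with $\sum_\tau n_\tau+\sum_\sigma m_\sigma\le n$, taking the union of the optimal codebooks, and using $e_{k,r}^r(P\circ f^{-1})=s^re_{k,r}^r(P)$ for a similitude $f$ of ratio $s$, one obtains
\[
e_{n,r}^r(\mu)\le p_0\sum_{\tau\in\Gamma^*}p_\tau s_\tau^r\,e_{n_\tau,r}^r(\nu)+\sum_{\sigma\in\Gamma}p_\sigma s_\sigma^r\,e_{m_\sigma,r}^r(\mu).
\]

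Fix $t=\xi_{2,r}/(\xi_{2,r}+r)$, so $\sum_{i=1}^N(p_is_i^r)^t=1$, and take $\Gamma=\Gamma_n:=\{\sigma\in\Omega^*:(p_\sigma s_\sigma^r)^t<1/n\le(p_{\sigma^-}s_{\sigma^-}^r)^t\}$. Since $\sum_{\sigma\in\Gamma_n}(p_\sigma s_\sigma^r)^t=1$, we get $\mathrm{card}(\Gamma_n)\le n/\underline a$ with $\underline a:=\min_i(p_is_i^r)^t$, and hence $\mathrm{card}(\Gamma_n^*)\le\mathrm{card}(\Gamma_n)$. Give one point to each $\mu\circ f_\sigma^{-1}$; since $p_\sigma s_\sigma^r<n^{-1/t}$, $1-1/t=-r/\xi_{2,r}$ and $e_{1,r}^r(\mu)\le(\mathrm{diam}\,K)^r=1$, this uses at most $n/\underline a$ points and contributes at most $\underline a^{-1}n^{-r/\xi_{2,r}}$. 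For the $\nu$-pieces, recall $\nu$ is a self-similar measure satisfying the OSC, so $\overline D_r(\nu)=\xi_{1,r}<\xi_{2,r}$; fixing $\xi\in(\xi_{1,r},\xi_{2,r})$ we have $e_{k,r}^r(\nu)\le c_1k^{-r/\xi}$ for all $k\ge 1$. Put $u=\xi/(\xi+r)$ and give $\lceil\lambda(p_\tau s_\tau^r)^u\rceil$ points to $\nu\circ f_\tau^{-1}$ for $\tau\in\Gamma_n^*$, where $\lambda>0$ is a parameter. Since $ur/\xi=1-u$, the $\nu$-contribution is at most $p_0c_1\lambda^{-r/\xi}T_n$, where $T_n:=\sum_{\tau\in\Gamma_n^*}(p_\tau s_\tau^r)^u$, while it uses at most $\lambda T_n+\mathrm{card}(\Gamma_n^*)$ points.

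The decisive step is the estimate $T_n\le C\,n^{1-u/t}$ with $C$ independent of $n$. This is genuinely better than the level-by-level bound, which only gives $O(n^{1-u/t}\log n)$ because $\xi<\xi_{2,r}$ forces $\sum_i(p_is_i^r)^u>1$; removing the logarithm is the one real obstacle. The trick is to redistribute each term of $T_n$ over the antichain lying below it: since $\Gamma_n$ is a maximal antichain, $\sum_{\sigma\in\Gamma_n:\,\tau\prec\sigma}(p_\sigma s_\sigma^r)^t=(p_\tau s_\tau^r)^t$ for every $\tau\in\Gamma_n^*$ (here $\tau\prec\sigma$ means $\tau$ is a proper prefix of $\sigma$), whence
\[
T_n=\sum_{\sigma\in\Gamma_n}(p_\sigma s_\sigma^r)^t\sum_{\tau\in\Gamma_n^*:\,\tau\prec\sigma}(p_\tau s_\tau^r)^{\,u-t}.
\]
Along the prefixes of a fixed $\sigma$ the quantities $(p_\tau s_\tau^r)^{u-t}$ grow by a factor at least $q:=\underline b^{-(t-u)}>1$ at each step (with $\underline b:=\min_ip_is_i^r$; note $u<t$ as $\xi<\xi_{2,r}$), so the inner sum is at most $\tfrac{q}{q-1}(p_{\sigma^-}s_{\sigma^-}^r)^{u-t}$. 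Using $(p_\sigma s_\sigma^r)^t=(p_{\sigma^-}s_{\sigma^-}^r)^t(p_{\sigma_k}s_{\sigma_k}^r)^t$ ($\sigma_k$ the last letter of $\sigma$) one finds $(p_\sigma s_\sigma^r)^t(p_{\sigma^-}s_{\sigma^-}^r)^{u-t}=(p_{\sigma_k}s_{\sigma_k}^r)^t(p_{\sigma^-}s_{\sigma^-}^r)^u\le\overline a\,(p_\sigma s_\sigma^r/\underline b)^u\le\overline a\,\underline b^{-u}n^{-u/t}$, where $\overline a:=\max_i(p_is_i^r)^t$ and we used $p_{\sigma^-}s_{\sigma^-}^r\le(p_\sigma s_\sigma^r)/\underline b$ and $p_\sigma s_\sigma^r<n^{-1/t}$. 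Summing over $\sigma\in\Gamma_n$ and invoking $\mathrm{card}(\Gamma_n)\le n/\underline a$ gives $T_n\le\frac{q\,\overline a}{(q-1)\underline a\,\underline b^u}\,n^{1-u/t}$.

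Finally, take $\lambda=n/T_n$, so the total number of points used is $\mathrm{card}(\Gamma_n)+\lambda T_n+\mathrm{card}(\Gamma_n^*)\le(1+2/\underline a)n$. Since $\lambda\ge C^{-1}n^{u/t}$ and a short computation gives $(u/t)(r/\xi)=(1-u)/t$, the $\nu$-contribution is at most $p_0c_1\lambda^{-r/\xi}T_n\le c'\,n^{-(1-u)/t}n^{1-u/t}=c'\,n^{1-1/t}=c'\,n^{-r/\xi_{2,r}}$, independently of the auxiliary $\xi$. Hence $e_{m,r}^r(\mu)\le(c'+\underline a^{-1})\,n^{-r/\xi_{2,r}}$ with $m\le(1+2/\underline a)n$, and by monotonicity of $N\mapsto e_{N,r}(\mu)$ (plus a harmless change of index) we conclude $\limsup_{m\to\infty}m^{r/\xi_{2,r}}e_{m,r}^r(\mu)<\infty$, i.e.\ $\overline Q_r^{\xi_r}(\mu)<\infty$. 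The argument is identical for Cases (i) and (ii) — the only case-specific input is $\overline D_r(\nu)=\xi_{1,r}$, valid for self-similar measures under the OSC — and the separation hypotheses are not used beyond this point, only (\ref{s55}) and the subadditivity of $e_{n,r}^r$; everything except the logarithm-free bound for $T_n$ is routine bookkeeping with the scaling relations and the defining equations of $\xi_{1,r}$ and $\xi_{2,r}$.
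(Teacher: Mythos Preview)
Your proof is correct and takes a genuinely different route from the paper's. The paper argues indirectly: for Case~(i) it invokes the two-sided estimate $e^r_{N_{k,r},r}(\mu)\asymp\sum_{\sigma\in\Lambda_{k,r}}h(\sigma)$ from \cite{zhu:13} (with $h(\sigma)=\mu(E_\sigma)s_\sigma^r$), reduces via Lemma~\ref{z1} to bounding $\lambda_{k,r}=\sum_{\sigma\in\Lambda_{k,r}}h(\sigma)^{\xi_r/(\xi_r+r)}$, and then controls $\lambda_{k,r}$ by expanding $\mu(E_\sigma)$ through (\ref{s12}) and reorganising the double sum according to the \emph{suffixes} $\sigma^{(l)}_{-h}$; the resulting series converges because $a(\xi_r)=\sum_i(t_is_i^r)^{\xi_r/(\xi_r+r)}<1$. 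Case~(ii) is handled separately with the parallel combinatorics and machinery of \cite{zhu:14}. Your argument is instead a direct codebook construction that treats both cases uniformly: you iterate (\ref{s55}) along the antichain $\Gamma_n$, spend one point on each $\mu$-piece, and allocate points to the $\nu$-pieces by the weights $(p_\tau s_\tau^r)^u$; the only case-dependent input is $\overline D_r(\nu)=\xi_{1,r}$, which in both cases is Graf--Luschgy for self-similar measures under the OSC. Your ``redistribute over the leaves'' identity for $T_n$ plays the same role as the paper's suffix classification---both turn a sum over prefixes into something controlled by the antichain---but yours avoids the explicit mass formula (\ref{s12}) and hence never uses the OSC/IOSC for $(f_i)_{i=1}^N$; only the separation hypothesis on the IFS underlying $\nu$ is needed. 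One cosmetic slip: the minimal step-to-step growth factor along a branch is $q=(\max_i p_is_i^r)^{-(t-u)}$, not $(\min_i p_is_i^r)^{-(t-u)}$; since $\max_i p_is_i^r<1$ this $q$ is still $>1$ and the geometric bound goes through unchanged.
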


For two number sequences $(a_n)_{n=1}^\infty$ and $(b_n)_{n=1}^\infty$, we write $a_n\lesssim b_n$ ($a_n\gtrsim b_n$) if there exists a constant $C>0$ such that $a_n\leq C b_n$ ($a_n\geq C b_n$) for all large $n$; we write $a_n\asymp b_n$ if $a_n\lesssim b_n$ and $a_n\gtrsim b_n$.
By Theorem \ref{mthm2}, (\ref{gg3}) and (\ref{gg4}), for an ISM $\mu$ in Case (i) or (ii), we have, $\overline{Q}_r^{\xi_r}(\mu)<\infty$ if and only if $\xi_{1,r}\neq\xi_{2,r}$. As a consequence, we have
\[
e^r_{n,r}(\mu)\asymp n^{-\frac{r}{\xi_r}}\;\;{\rm if}\;\;\xi_{1,r}\neq\xi_{2,r}.
\]
For the cases when $\xi_{1,r}=\xi_{2,r}$, we will show

\begin{theorem}\label{mthm3}
Assume that $\xi_{1,r}=\xi_{2,r}$. Then
for an ISM $\mu$ in Case (i), we have
\begin{eqnarray}\label{sanguo1}
n^{-\frac{r}{\xi_r}}\cdot\log n\lesssim e^r_{n,r}(\mu)\lesssim n^{-\frac{r}{\xi_r}}\cdot(\log n)^{\frac{\xi_r+r}{\xi_r}};
\end{eqnarray}
for an ISM $\mu$ in Case (ii), we have
\begin{eqnarray}\label{sanguo}
e_{n,r}^r(\mu)\asymp n^{-\frac{r}{\xi_r}}(\log n)^{\frac{\xi_r+r}{\xi_r}}.
\end{eqnarray}
\end{theorem}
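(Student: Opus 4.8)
\section*{Proof proposal for Theorem \ref{mthm3}}

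The plan is to transfer everything to the symbolic side and isolate a single convex allocation problem. Write $\Lambda^{*}=\bigcup_{k\ge 0}\{1,\dots ,N\}^{k}$ and, for $\sigma =(\sigma _1,\dots ,\sigma _k)$, put $p_\sigma =\prod _j p_{\sigma _j}$, $s_\sigma =\prod _j s_{\sigma _j}$, $f_\sigma =f_{\sigma _1}\circ\cdots\circ f_{\sigma _k}$. Iterating \eqref{s55} gives $\mu =p_0\sum_{\sigma\in\Lambda ^{*}}p_\sigma\,\nu\circ f_\sigma ^{-1}$, and for any finite cut set $\Gamma$ of $\Lambda ^{*}$
\[
\mu =p_0\sum_{\sigma\prec\Gamma}p_\sigma\,\nu\circ f_\sigma ^{-1}+\sum_{\tau\in\Gamma}p_\tau\,\mu\circ f_\tau ^{-1},
\]
where $\sigma\prec\Gamma$ means that $\sigma$ is a proper prefix of some word of $\Gamma$. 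Put $\theta =\xi _r/(\xi _r+r)$; under the standing hypothesis $\xi _{1,r}=\xi _{2,r}=\xi _r$ the weights $b_\sigma :=(p_\sigma s_\sigma ^{r})^{\theta}$ are multiplicative with $\sum_{i=1}^{N}b_i=1$, so $\sum_{\sigma\in\Lambda ^{k}}b_\sigma =1$ for every $k$, while $\rho :=\sum_i p_i s_i^{r}=\sum_i b_i^{1/\theta}<1$ forces $\sum_{\sigma\in\Lambda ^{*}}p_\sigma s_\sigma ^{r}=(1-\rho )^{-1}<\infty$. For $\varepsilon >0$ I would work with the cut set $\Gamma _\varepsilon =\{\sigma :b_\sigma\le\varepsilon <b_{\sigma ^{-}}\}$ and its set of proper prefixes $\mathcal A_\varepsilon =\{\sigma :b_\sigma >\varepsilon\}$, and first prove the elementary counting lemma
\[
\mathrm{card}(\Gamma _\varepsilon )\asymp\varepsilon ^{-1},\qquad \mathrm{card}(\mathcal A_\varepsilon )\lesssim\varepsilon ^{-1}\log\tfrac 1\varepsilon ,\qquad \sum_{\sigma\in\mathcal A_\varepsilon}b_\sigma\asymp\log\tfrac 1\varepsilon ,\qquad \sum_{\tau\in\Gamma _\varepsilon}p_\tau s_\tau ^{r}\asymp\varepsilon ^{r/\xi _r},
\]
all of which follow from $\sum_{\Lambda ^{k}}b_\sigma =1$ and $\varepsilon <b_\sigma\le(\max_i b_i)^{|\sigma |}$, so that only $\asymp\log\frac 1\varepsilon$ word lengths are involved. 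Throughout I use the identity $\tfrac 1\theta -\tfrac r{\xi _r}=1$.

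For the upper bounds (both cases) I take $\lambda =1/\varepsilon$ with $\varepsilon\asymp(\log n)/n$ and let $\alpha$ consist of: for each $\sigma\in\mathcal A_\varepsilon$ a near-optimal $\lceil\lambda b_\sigma\rceil$-point codebook for $\nu\circ f_\sigma ^{-1}$, together with one point in each $f_\tau (K)$, $\tau\in\Gamma _\varepsilon$. Since $\nu$ is a self-similar measure satisfying the OSC, $e_{m,r}^{r}(\nu )\lesssim m^{-r/\xi _{1,r}}=m^{-r/\xi _r}$, so $e_{m,r}^{r}(\nu\circ f_\sigma ^{-1})=s_\sigma ^{r}e_{m,r}^{r}(\nu )\lesssim s_\sigma ^{r}m^{-r/\xi _r}$, and the displayed decomposition (with $\Gamma =\Gamma _\varepsilon$), $p_\sigma s_\sigma ^{r}b_\sigma ^{-r/\xi _r}=b_\sigma$, and $\mathrm{diam}(K)=1$ give
\[
\int d(x,\alpha )^{r}\,d\mu\ \lesssim\ \lambda ^{-r/\xi _r}\sum_{\sigma\in\mathcal A_\varepsilon}b_\sigma+\sum_{\tau\in\Gamma _\varepsilon}p_\tau s_\tau ^{r}\ \asymp\ \varepsilon ^{r/\xi _r}\log\tfrac 1\varepsilon ,
\]
while $\mathrm{card}(\alpha )\lesssim\lambda\sum_{\mathcal A_\varepsilon}b_\sigma+\mathrm{card}(\mathcal A_\varepsilon )+\mathrm{card}(\Gamma _\varepsilon )\lesssim\varepsilon ^{-1}\log\frac 1\varepsilon\asymp n$. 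Substituting $\varepsilon\asymp(\log n)/n$ and $(\xi _r+r)/\xi _r=1+r/\xi _r$ yields the upper bound $e_{n,r}^{r}(\mu )\lesssim n^{-r/\xi _r}(\log n)^{(\xi _r+r)/\xi _r}$ in both \eqref{sanguo1} and \eqref{sanguo}.

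For the lower bound in Case (ii) the key geometric input is that the sets $f_\sigma (C)$ are uniformly separated. Since $C\subset U$ one gets $f_{\tilde\sigma}(C)\subset U$ for all $\tilde\sigma$, whence $f_\sigma (C)\subset A_\sigma :=f_\sigma (U)\setminus\bigcup_i f_{\sigma i}(\mathrm{cl}(U))$; the open sets $A_\sigma$ are pairwise disjoint (using $f_i(U)\cap f_j(U)=\emptyset$ and $C\cap f_i(\mathrm{cl}(U))=\emptyset$), and because $C$ is compactly contained in $U$ and in the complement of $\bigcup_i f_i(\mathrm{cl}(U))$ one has $d(f_\sigma (C),A_\sigma ^{\,c})\ge\delta _1 s_\sigma$ for a fixed $\delta _1>0$, hence $d(f_\sigma (C),f_{\sigma '}(C))\ge\delta _1\max(s_\sigma ,s_{\sigma '})$ for all $\sigma\ne\sigma '$. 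Now fix $\alpha$ with $\mathrm{card}(\alpha )=n$ and $\varepsilon\asymp(\log n)/n$; for $\sigma\in\mathcal A_\varepsilon$ let $n_\sigma$ be the number of points of $\alpha$ within $\tfrac 14\delta _1 s_\sigma$ of $f_\sigma (C)$, so separation makes the counting sets disjoint and $\sum_\sigma n_\sigma\le n$. Using $\mu\ge p_0\sum_{\sigma\in\mathcal A_\varepsilon}p_\sigma\,\nu\circ f_\sigma ^{-1}$, the capped‑error estimate, and $e_{m,r}^{r}(\nu )\gtrsim m^{-r/\xi _r}$ (i.e. $\underline Q_r^{\xi _{1,r}}(\nu )>0$),
\[
\int d(x,\alpha )^{r}\,d\mu\ \gtrsim\ \sum_{\sigma\in\mathcal A_\varepsilon}p_0\,p_\sigma s_\sigma ^{r}\,\max(1,n_\sigma )^{-r/\xi _r}.
\]
It then remains to show that, over all $(n_\sigma )$ with $\sum n_\sigma\le n$, the right side is $\gtrsim n^{-r/\xi _r}(\log n)^{(\xi _r+r)/\xi _r}$, and this I expect to be the main obstacle. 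The unconstrained Lagrange optimum $n_\sigma\propto b_\sigma$ has value $\asymp n^{-r/\xi _r}\bigl(\sum_{\mathcal A_\varepsilon}b_\sigma\bigr)^{1/\theta}\asymp n^{-r/\xi _r}(\log n)^{(\xi _r+r)/\xi _r}$; the degenerate strategy of leaving many cells empty must be excluded by splitting on whether the $b$‑mass of $\{\sigma :n_\sigma =0\}$ is a large fraction of $\sum_{\mathcal A_\varepsilon}b_\sigma$, using in one branch the per‑length identity $\sum_{\Lambda ^{j}}b_\sigma =1$ to bound how much can be discarded cheaply, and using the choice $\varepsilon\asymp(\log n)/n$ to balance the two contributions. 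Together with the upper bound this gives \eqref{sanguo}.

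For Case (i) we have $C=E=K$ and the sets $f_\sigma (E)$ are nested rather than separated, so the previous argument collapses; instead I would use the OSC for $(f_i)$ at a single scale, but at $\asymp\log n$ scales at once. Choose geometrically spaced $\delta _1>\cdots >\delta _{k_0}$ with $k_0\asymp\log n$ and $\delta _{k_0}$ a fixed negative power of $n$, and set $G_{\delta _m}=\{\sigma :s_\sigma\le\delta _m<s_{\sigma ^{-}}\}$, so that $\{f_\sigma (E):\sigma\in G_{\delta _m}\}$ consists of comparable‑diameter pieces with uniformly bounded multiplicity $M_0$; for suitably spaced scales the $G_{\delta _m}$ are pairwise disjoint subsets of $\Lambda ^{*}$, so $\mu\ge p_0\sum_m\sum_{\tau\in G_{\delta _m}}p_\tau\,\nu\circ f_\tau ^{-1}$. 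For each fixed $m$, the bounded‑multiplicity version of the argument above, $\underline Q_r^{\xi _{1,r}}(\nu )>0$, and the cut‑set identity $\sum_{\tau\in G_{\delta _m}}(p_\tau s_\tau ^{r})^{\theta}=1$ give $\int d(x,\alpha )^{r}\,d\bigl(\sum_{\tau\in G_{\delta _m}}p_\tau\,\nu\circ f_\tau ^{-1}\bigr)\gtrsim n^{-r/\xi _r}$, uniformly in $m$ as long as $n\gtrsim\mathrm{card}(G_{\delta _m})$, which fixes the admissible range of $\delta _m$. Summing over the $k_0\asymp\log n$ scales yields $e_{n,r}^{r}(\mu )\gtrsim n^{-r/\xi _r}\log n$, proving \eqref{sanguo1}. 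The exponent $(\xi _r+r)/\xi _r$ is not recovered here precisely because, without separation, the $\log n$ scales can only be summed crudely rather than jointly optimized as in Case (ii).
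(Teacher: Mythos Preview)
Your approach is genuinely different from the paper's. The paper does not construct codebooks or invoke geometric separation at all; it leans entirely on the machinery of the companion papers, namely the two--sided estimate $e^r_{N_{k,r},r}(\mu)\asymp N_{k,r}^{-r/\xi_r}\lambda_{k,r}^{(\xi_r+r)/\xi_r}$ from Lemma~\ref{z1} together with the already--established bounds $(p_0 l_{1k})^{\theta}\le\lambda_{k,r}\le l_{2k}+2$ from \cite[Lemmas 3.4, 3.5]{zhu:13}. Since $l_{1k},l_{2k}\asymp\log k\asymp\log N_{k,r}$, substitution and interpolation from $(N_{k,r})$ to $(n)$ give \eqref{sanguo1} in a few lines; for \eqref{sanguo} one replaces $\lambda_{k,r},l_{1k},l_{2k},N_{k,r}$ by $\widetilde\lambda_{k,r},\widetilde l_{1k},\widetilde l_{2k},\phi_{k,r}$ and uses the matching lower bound $\widetilde\lambda_{k,r}\ge\widetilde l_{1k}$ from \cite[Lemma 3.5]{zhu:14}, which is why Case~(ii) is sharp. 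So the paper is short precisely because the hard equivalences were exported; your route is more self--contained and makes the geometric reason for the Case~(i)/Case~(ii) discrepancy transparent.

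The step you flag as ``the main obstacle'' in the Case~(ii) lower bound is actually easy to close, and you should just write it out. With $S=\{\sigma\in\mathcal A_\varepsilon:n_\sigma\ge 1\}$ and $\beta=r/\xi_r$, H\"older gives $\sum_{S}w_\sigma n_\sigma^{-\beta}\ge n^{-\beta}\bigl(\sum_S b_\sigma\bigr)^{1/\theta}$, while for $\sigma\notin S$ one has $w_\sigma=b_\sigma^{1/\theta}=b_\sigma\cdot b_\sigma^{\beta}>\varepsilon^{\beta}b_\sigma$; hence either $\sum_S b_\sigma\ge\tfrac12\sum_{\mathcal A_\varepsilon}b_\sigma\asymp\log n$ and the first term dominates, or $\sum_{\mathcal A_\varepsilon\setminus S}b_\sigma\gtrsim\log n$ and the second term is $\gtrsim\varepsilon^{\beta}\log n\asymp n^{-\beta}(\log n)^{1+\beta}$, the choice $\varepsilon\asymp(\log n)/n$ being exactly what equalizes the two branches. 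The more delicate point is your Case~(i) per--scale estimate $\int d(x,\alpha)^r\,dP_m\gtrsim n^{-r/\xi_r}$: bounded multiplicity under the OSC only yields $d(x,\alpha)\ge\min\bigl(d(x,\alpha_\tau),c\,\delta_m\bigr)$ for $x\in f_\tau(E)$, with $\alpha_\tau$ the points within $C\delta_m$ of $f_\tau(E)$ and $\sum_\tau|\alpha_\tau|\le M_0 n$. You then need the capped lower bound $\int\min(d(x,\alpha_\tau),c\,\delta_m)^r\,d(\nu\circ f_\tau^{-1})\gtrsim s_\tau^r\max(1,|\alpha_\tau|)^{-\beta}$, which is true but requires an argument (e.g.\ that $e_{k,r}(\nu)\le c'\cdot\mathrm{diam}(E)$ for $k\ge 1$ so the cap is inactive once $|\alpha_\tau|\ge 1$, and the $|\alpha_\tau|=0$ case is immediate); combined with $\sum_{\tau\in G_{\delta_m}}b_\tau=1$ and the same H\"older step this gives the uniform $n^{-\beta}$. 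This is routine, but ``the bounded--multiplicity version of the argument above'' is doing real work and should be spelled out.
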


At the end of the paper, we will construct concrete examples to illustrate our main result. In contrast to self-similar measures (cf. \cite[Theorem 3.1]{GL:01}), our examples also show that, the upper quantization coefficient for an ISM $\mu$ of order $r$ can be finite for some $r$ while infinite for another $r$.

\section{Proofs of main results}
\subsection{Proof of Theorem \ref{mthm2}: Case (i)}
We will need the following notations. Set
\begin{eqnarray*}
&&\Omega_n:=\{1,\ldots, N\}^n,\; n\geq 1;\;\Omega^*:=\bigcup_{n=1}^\infty\Omega_n;\;\;\Omega_\infty:=\{1,\ldots, N\}^{\mathbb{N}};\\&&s_\sigma:=\prod_{h=1}^ns_{\sigma_h},\;p_\sigma:=\prod_{h=1}^np_{\sigma_h},\;
t_\sigma:=\prod_{h=1}^nt_{\sigma_h},\;\sigma=(\sigma_1,\ldots,\sigma_n)\in\Omega_n.
\end{eqnarray*}
For $\sigma\in\Omega_n$, we define $|\sigma|:=n$ and $\sigma|_0=\theta:=$empty word. For
$1\leq h<n$ and $\sigma\in\Omega^*$ with $|\sigma|\geq h$, we
write
\begin{eqnarray*}
\sigma^{(l)}_{-h}:=(\sigma_{h+1},\ldots,\sigma_{|\sigma|}),\;\sigma|_h:=(\sigma_1,\ldots,\sigma_h),\;\;\sigma^-:=\sigma|_{|\sigma|-1}.
\end{eqnarray*}
As is shown in \cite[Lemma 2.1]{zhu:13}, we have
\begin{eqnarray}\label{s12}
\mu(E_\sigma)=\sum_{h=0}^{k-1}p_0p_{\sigma|_h} t_{\sigma_{-h}^{(l)}}+p_\sigma,\;\;\sigma\in\Omega_k,\;\;k\geq 1.
\end{eqnarray}
For $\sigma,\tau\in\Omega^*$, we write
$\sigma\ast\tau:=(\sigma_1,\ldots,\sigma_{|\sigma|},\tau_1,\ldots,\tau_{|\tau|})$.

If $\sigma,\tau\in\Omega^*$ and
$|\sigma|\leq|\tau|,\sigma=\tau|_{|\sigma|}$, then we write $\sigma\preceq\tau$ and call $\sigma$ a predecessor of $\tau$.
A finite set $\Gamma\subset\Omega^*$ is called an antichain if for any two words $\sigma,\tau\in\Gamma$, we have neither $\sigma\preceq\tau$ nor $\tau\preceq\sigma$; a finite antichain $\Gamma$ is said to be maximal if every $\tau\in\Omega_\infty$ has a predecessor in $\Gamma$.
We write
\begin{eqnarray*}
&&h(\sigma):=\mu(E_\sigma) s_\sigma^r;\;\underline{\eta}_r:=\min_{1\leq i\leq N}\min\big\{(p_0t_i+p_i),t_i\big\}s_i^r;\\&&\Lambda_{k,r}:=\{\sigma\in\Omega^*:h(\sigma^-)\geq k^{-1}\underline{\eta}_r>h(\sigma)\},\;\;N_{k,r}:={\rm card}(\Lambda_{k,r}).
\end{eqnarray*}
 One can see that $\Lambda_{k,r},k\geq 1$, are finite maximal antichains.

  For an ISM $\mu$ in Case (i), by \cite[Lemma 2.2]{zhu:13}, we have
\begin{eqnarray}\label{knownestimate}
D\sum_{\sigma\in\Lambda_{k,r}}h(\sigma)\leq e^r_{ N_{k,r},r}(\mu)\leq\sum_{\sigma\in\Lambda_{k,r}}h(\sigma),
\end{eqnarray}
where $D>0$, is a constant which is independent of $k$. We write
\begin{eqnarray}\label{lambdak}
&&\lambda_{k,r}:=\sum_{\sigma\in\Lambda_{k,r}}h(\sigma)^{\frac{\xi_r}{\xi_r+r}},\;\;\overline{\lambda}:=\limsup_{k\to\infty}\lambda_{k,r},
\;\;\underline{\lambda}:=\liminf_{k\to\infty}\lambda_{k,r};\\
&&Q_{k,r}:=N_{k,r}^{\frac{r}{\xi_r}}e_{N_{k,r},r}(\mu),\;\;\overline{P}_r^{\xi_r}(\mu):=\limsup_{k\to\infty}Q_{k,r}.\nonumber
\end{eqnarray}

\begin{lemma}\label{z1}
We have $\lambda_{k,r}\asymp Q_{k,r}^{\frac{\xi_r}{\xi_r+r}}$. As a result, we have, $\overline{Q}_r^{\xi_r}(\mu)<\infty$ if and only if $\overline{\lambda}<\infty$; $\underline{Q}_r^{\xi_r}(\mu)>0$ if and only if $\underline{\lambda}>0$.
\end{lemma}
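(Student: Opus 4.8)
The plan is to establish the two-sided bound $\lambda_{k,r}\asymp Q_{k,r}^{\xi_r/(\xi_r+r)}$ by comparing $N_{k,r}$ and $\sum_{\sigma\in\Lambda_{k,r}}h(\sigma)$ separately against $\lambda_{k,r}$, and then feeding these into the known estimate \eqref{knownestimate}. First I would exploit the defining inequality of the antichain $\Lambda_{k,r}$, namely $h(\sigma^-)\geq k^{-1}\underline{\eta}_r>h(\sigma)$. The upper bound $h(\sigma)<k^{-1}\underline{\eta}_r$ is immediate. For a matching lower bound on $h(\sigma)$, I would note that $h(\sigma)=h(\sigma^-)\cdot\frac{\mu(E_\sigma)}{\mu(E_{\sigma^-})}s_{\sigma_{|\sigma|}}^r$, and that the ratio $\mu(E_\sigma)/\mu(E_{\sigma^-})$ is bounded below: from \eqref{s12} one sees $\mu(E_\sigma)\geq(p_0t_{\sigma_{|\sigma|}}+p_{\sigma_{|\sigma|}})\mu(E_{\sigma^-})$ or something of that flavor, so that $h(\sigma)\geq c\,k^{-1}\underline{\eta}_r$ for a constant $c>0$ depending only on the defining data (this is exactly why $\underline{\eta}_r$ is defined the way it is). Hence $h(\sigma)\asymp k^{-1}$ uniformly over $\sigma\in\Lambda_{k,r}$, with constants independent of $k$.

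Given $h(\sigma)\asymp k^{-1}$ for every $\sigma\in\Lambda_{k,r}$, the sum $\lambda_{k,r}=\sum_{\sigma\in\Lambda_{k,r}}h(\sigma)^{\xi_r/(\xi_r+r)}\asymp N_{k,r}\cdot k^{-\xi_r/(\xi_r+r)}$, and likewise $\sum_{\sigma\in\Lambda_{k,r}}h(\sigma)\asymp N_{k,r}k^{-1}$. Combining these with $e^r_{N_{k,r},r}(\mu)\asymp\sum_{\sigma\in\Lambda_{k,r}}h(\sigma)$ from \eqref{knownestimate} gives
\[
Q_{k,r}^{\frac{\xi_r+r}{\xi_r}\cdot\frac{r}{\xi_r+r}}
\;=\;N_{k,r}e^{r}_{N_{k,r},r}(\mu)^{\frac{\xi_r}{\xi_r+r}}\cdot\text{(powers)},
\]
so I would instead phrase it cleanly: $Q_{k,r}^{\xi_r/(\xi_r+r)}=N_{k,r}^{r/(\xi_r+r)}e_{N_{k,r},r}(\mu)^{\xi_r/(\xi_r+r)}\asymp N_{k,r}^{r/(\xi_r+r)}\bigl(N_{k,r}k^{-1}\bigr)^{\xi_r/(\xi_r+r)}=N_{k,r}k^{-\xi_r/(\xi_r+r)}\asymp\lambda_{k,r}$. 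That is the first assertion.

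For the second assertion, the ``only if''/``if'' equivalences for the upper and lower quantization coefficients follow once I connect $\overline{Q}_r^{\xi_r}(\mu)$ and $\underline{Q}_r^{\xi_r}(\mu)$ (defined via $\limsup_n$, $\liminf_n$ over all $n$) to $\overline{P}_r^{\xi_r}(\mu)=\limsup_k Q_{k,r}$ and its liminf analogue. The standard device is that $(N_{k,r})_k$ is, up to bounded multiplicative gaps, a ``dense enough'' subsequence: since $N_{k+1,r}/N_{k,r}$ is bounded (each word of $\Lambda_{k,r}$ splits into at most $N$ children, and a comparison of the thresholds $k^{-1}$ and $(k+1)^{-1}$ controls how many levels deeper $\Lambda_{k+1,r}$ can go), monotonicity of $n\mapsto e_{n,r}(\mu)$ lets one sandwich $n^{r/\xi_r}e^r_{n,r}(\mu)$ between constant multiples of $Q_{k,r}$ and $Q_{k+1,r}$ for $N_{k,r}\leq n< N_{k+1,r}$. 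Therefore $\overline{Q}_r^{\xi_r}(\mu)\asymp\overline{P}_r^{\xi_r}(\mu)=\limsup_k Q_{k,r}$ and $\underline{Q}_r^{\xi_r}(\mu)\asymp\liminf_k Q_{k,r}$; combined with $\lambda_{k,r}\asymp Q_{k,r}^{\xi_r/(\xi_r+r)}$ this yields exactly the stated equivalences, noting $\limsup_k\lambda_{k,r}=\overline{\lambda}$ and $\liminf_k\lambda_{k,r}=\underline{\lambda}$.

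I expect the main obstacle to be the uniform two-sided estimate $h(\sigma)\asymp k^{-1}$ on $\Lambda_{k,r}$ — specifically, extracting a lower bound on the ``one-step'' mass ratio $\mu(E_\sigma)/\mu(E_{\sigma^-})$ from the explicit but slightly unwieldy formula \eqref{s12}, and checking it is uniform in $\sigma$ and $k$. The boundedness of $N_{k+1,r}/N_{k,r}$ needed for the second part is routine by comparison but should be stated carefully. Everything else is bookkeeping with the power $\xi_r/(\xi_r+r)$.
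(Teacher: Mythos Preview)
Your proposal is correct and essentially matches the paper's proof: both rest on the uniform two-sided bound $\underline{\eta}_r^{\,2} k^{-1}\le h(\sigma)<\underline{\eta}_r k^{-1}$ for $\sigma\in\Lambda_{k,r}$ (the lower bound indeed drops out of \eqref{s12} via $\mu(E_{\sigma^-\ast i})\ge\min\{t_i,\,p_0t_i+p_i\}\,\mu(E_{\sigma^-})$, exactly as you anticipate) combined with \eqref{knownestimate}. The only organisational difference is that the paper obtains the direction $\lambda_{k,r}\lesssim Q_{k,r}^{\xi_r/(\xi_r+r)}$ by H\"older's inequality on $\sum h(\sigma)^{\xi_r/(\xi_r+r)}$ rather than via the pointwise upper bound, and for the second assertion it simply cites \cite[Lemma~3.6]{zhu:13}, which is precisely the bounded-gap argument $N_{k+1,r}\le N\,N_{k,r}$ that you sketch.
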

\begin{proof}
By (\ref{knownestimate}), for all large $k$, we have
\begin{eqnarray*}
\sum_{\sigma\in\Lambda_{k,r}}h(\sigma)\leq D^{-1}e^r_{N_{k,r},r}(\mu).
\end{eqnarray*}
Thus, by H\"{o}lder's inequality, for all large $k$, we have
\begin{eqnarray*}
\lambda_{k,r}&=&\sum_{\sigma\in\Lambda_{k,r}}h(\sigma)^{\frac{\xi_r}{\xi_r+r}}\leq \bigg(\sum_{\sigma\in\Lambda_{k,r}}h(\sigma)\bigg)^{\frac{\xi_r}{\xi_r+r}}N_{k,r}^{\frac{r}{\xi_r+r}}\\
&\leq&\bigg(D^{-1}e^r_{N_{k,r},r}(\mu)\bigg)^{\frac{\xi_r}{\xi_r+r}}\cdot N_{k,r}^{\frac{r}{\xi_r+r}}\asymp Q_{k,r}^{\frac{\xi_r}{\xi_r+r}}.
\end{eqnarray*}
It follows that $\lambda_{k,r}^{\frac{\xi_r+r}{\xi_r}}\lesssim Q_{k,r}$. On the other hand, by (\ref{lambdak}) and the definition of $\Lambda_{k,r}$, one can see that $N_{k,r}(k^{-1}\underline{\eta}_r^2)^{\frac{\xi_r}{\xi_r+r}}\leq \lambda_{k,r}$. It follows that
\begin{eqnarray*}
e^r_{ N_{k,r},r}(\mu)&\leq& \sum_{\sigma\in\Lambda_{k,r}}h(\sigma)\leq \sum_{\sigma\in\Lambda_{k,r}}h(\sigma)^{\frac{\xi_r}{\xi_r+r}}\cdot h(\sigma)^{\frac{r}{\xi_r+r}}\leq \lambda_{k,r} (k^{-1}\underline{\eta}_r)^{\frac{r}{\xi_r+r}}\\
&\leq&\lambda_{k,r}(\lambda_{k,r}N_{k,r}^{-1})^{\frac{r}{\xi_r}}\underline{\eta}_r^{-\frac{r}{\xi_r+r}}=N_{k,r}^{-\frac{r}{\xi_r}}\underline{\eta}_r^{-\frac{r}{\xi_r+r}}\lambda_{k,r}^{1+\frac{r}{\xi_r}}.
\end{eqnarray*}
Hence $\lambda_{k,r}^{\frac{\xi_r+r}{\xi_r}}\gtrsim Q_{k,r}$. Combining the above analysis, the first part of the lemma follows. This and \cite[Lemma 3.6]{zhu:13} imply the remaining part.
\end{proof}
\begin{remark}
In view of (\ref{s12}), ISMs in Case (i) are typically not Markov-type measures. However, for the proof of Theorem \ref{mthm2} Case (i),we will benefit from \cite[Proposition 3.13]{KZ:14} for some ideas of classifying words in $\Lambda_{k,r}$, while the techniques we used in \cite{zhu:13} unfortunately does not work.
\end{remark}
\emph{Proof of Theorem \ref{mthm2}: Case (i)}

For every $k\geq 1$, we have
\begin{eqnarray}
\lambda_{k,r}&=&\sum_{\sigma\in\Lambda_{k,r}}\bigg(\sum_{h=0}^{|\sigma|-1}(p_0p_{\sigma|_h} t_{\sigma_{-h}^{(l)}})s_\sigma^r+p_\sigma s_\sigma^r\bigg)^{\frac{\xi_r}{\xi_r+r}}\nonumber
\\&\leq&\sum_{\sigma\in\Lambda_{k,r}}\sum_{h=0}^{|\sigma|-1}(p_0p_{\sigma|_h} t_{\sigma_{-h}^{(l)}}s_\sigma^r)^{\frac{\xi_r}{\xi_r+r}}+\sum_{\sigma\in\Lambda_{k,r}}(p_\sigma s_\sigma^r)^{\frac{\xi_r}{\xi_r+r}}.\label{estimate00}
\end{eqnarray}
For each $h\geq 1$ and $\rho\in\Omega_h$, we write
\[
\Gamma(\rho):=\{\omega\in\Omega^*:\omega\ast\rho\in\Lambda_{k,r}\}.
\]
One can see that $\Gamma(\rho)$ may be empty for some $\rho\in\Omega^*$. Set
\[
\overline{\eta}_r:=\max_{1\leq i\leq N}\max\big\{(p_0t_i+p_i),t_i\big\}s_i^r.
\]
Let $H$ be the smallest integer such that $\overline{\eta}_r^{H}<2^{-1}$. For $\omega\in\Gamma(\rho)$ and $\tau\in\Omega_{H}$,
\[
h(\omega\ast\tau\ast\rho)<k^{-1}\underline{\eta}_r\cdot\overline{\eta}_r^{H}<k^{-1}\underline{\eta}_r\cdot 2^{-1}\leq (k+1)^{-1}\underline{\eta}_r.
\]
 This implies that, $\big||\omega|-|\widetilde{\omega}|\big|\leq H$ for every pair $\omega,\widetilde{\omega}\in\Gamma(\rho)$ with $\omega\preceq\widetilde{\omega}$ or $\widetilde{\omega}\preceq\omega$. For every $\sigma\in\Omega^*$ and $j\geq 0$, we write
\begin{eqnarray}\label{s30}
\Gamma(\sigma,j):=\{\tau\in\Omega_{|\sigma|+j}:\sigma\preceq\tau\}.
\end{eqnarray}
Then, by the above analysis, there exists a finite antichain $\Upsilon(\rho)$
such that
\begin{eqnarray}\label{gg1}
\Gamma(\rho)\subset\bigcup_{\tau\in\Upsilon(\rho)}\bigcup_{j=1}^{H}\Gamma(\tau,j).
\end{eqnarray}
By the hypothesis, we have, $\xi_r=\xi_{2,r}>\xi_{1,r}$. Hence,
\[
b(\xi_r):=\sum_{i=1}^N(p_i s_i^r)^{\frac{\xi_r}{\xi_r+r}}=1.
\]
Furthermore, we have $\sum_{\sigma\in\Upsilon}(p_\sigma s_\sigma^r)^{\frac{\xi_r}{\xi_r+r}}=1$ for every finite maximal antichain $\Upsilon$. Using this and (\ref{gg1}), we deduce
\begin{eqnarray}\label{estimate01}
&& \sum_{\omega\in\Gamma(\rho)}(p_\omega s_\omega^r)^{\frac{\xi_r}{\xi_r+r}}\leq\sum_{\tau\in\Upsilon(\rho)}\sum_{j=1}^{H}\sum_{\sigma\in\Gamma(\tau, j)}(p_\sigma s_\sigma^r)^{\frac{\xi_r}{\xi_r+r}}\nonumber\\
  &&\leq\sum_{\tau\in\Upsilon(\rho)}(p_\tau s_\tau^r)^{\frac{\xi_r}{\xi_r+r}}\bigg(1+\sum_{j=1}^{H}\bigg(\sum_{i=1}^N(p_i s_i^r)^{\frac{\xi_r}{\xi_r+r}}\bigg)^h\bigg)=1+H.
\end{eqnarray}
Set $l_{1j}:=\min_{\sigma\in\Lambda_{k,r}}|\sigma|$ and $l_{2j}:=\max_{\sigma\in\Lambda_{k,r}}|\sigma|$.
We classify the words in $\Lambda_{k,r}$ according to the suffices $\sigma^{(l)}_{-h}$. Note that for every $\sigma\in\Lambda_{k,r}$, we have $l_{1j}\leq|\sigma|\leq l_{2j}$ and $\sigma^{(l)}_{-h}\in\Omega_{|\sigma|-h},h\leq|\sigma|-1$. We have
\[
\Lambda_{k,r}\subset\bigcup_{n=1}^{l_{2j}}\bigcup_{h=1}^{n-1}\bigcup_{\rho\in\Omega_{n-h}}\Gamma(\rho)=\bigcup_{h=1}^{l_{2j}-1}
\bigcup_{\rho\in\Omega_h}\Gamma(\rho).
\]
Since $\Lambda_{k,r}$ is a maximal antichain, we have, $\sum_{\sigma\in\Lambda_{k,r}}(p_\sigma s_\sigma^r)^{\frac{\xi_r}{\xi_r+r}}=1$. Using (\ref{estimate00}) and (\ref{estimate01}), we further deduce
\begin{eqnarray*}
\lambda_{k,r}&\leq&\sum_{\sigma\in\Lambda_{k,r}}\sum_{h=0}^{|\sigma|-1}(p_{\sigma|_h} t_{\sigma_{-h}^{(l)}}s_\sigma^r)^{\frac{\xi_r}{\xi_r+r}}+1\\
&\leq&\sum_{h=1}^{l_{2j}-1}\sum_{\rho\in\Omega_h} \sum_{\omega\in\Gamma(\rho)}(p_\omega t_\rho s_\omega s_\rho^r)^{\frac{\xi_r}{\xi_r+r}}+1\\
&\leq& (1+H)\sum_{h=1}^{l_{2j}-1}\sum_{\rho\in\Omega_h}(t_\rho s_\rho^r)^{\frac{\xi_r}{\xi_r+r}}+1.
\end{eqnarray*}
Again, by the hypothesis, we have, $a(\xi_r):=\sum_{i=1}^N(t_i s_i^r)^{\frac{\xi_r}{\xi_r+r}}<1$. Hence,
\begin{eqnarray*}
\lambda_{k,r}\leq (1+H)\sum_{h=1}^{l_{2j}-1}a(\xi_r)^h+1\leq\frac{(1+H)a(\xi_r)}{1-a(\xi_r)}+1<\infty.
\end{eqnarray*}
Thus, by Lemma \ref{z1}, we conclude that $\overline{Q}_r^{\xi_r}(\mu)$ is finite.

\subsection{Proof of Theorem \ref{mthm2}: Case (ii)}
In the following, we consider ISMs in Case (ii). Write
\[
\Phi_n:=\{1,\ldots, M\}^n, \;\Phi^*:=\bigcup_{n=1}^\infty\Phi_n,\;c_\sigma:=\prod_{h=1}^nc_{\sigma_h}.
\]
All the notations for words in $\Phi^*$ are defined in the same way as for words in $\Omega^*$. Let $\Gamma(\sigma,j)$ be as defined in (\ref{s30}). For every $\sigma\in\Omega^*$, we write
\begin{eqnarray*}
\Gamma^*(\sigma):=\bigcup_{j\geq 1}\Gamma(\sigma,j).
\end{eqnarray*}
For a finite maximal antichain $\Upsilon\subset\Omega^*$, we define
\[
l(\Upsilon):=\min_{\rho\in\Upsilon}|\rho|,\;\;L(\Upsilon):=\max_{\rho\in\Upsilon}|\rho|.
\]
For each $\sigma\in\Omega_{l(\Upsilon)}$, we define
\begin{eqnarray*}
\Lambda_\Upsilon(\sigma):=\{\tau\in\Omega^*:\sigma\preceq\tau,\Gamma^*(\tau)\cap\Upsilon\neq\emptyset\},\;\;\Lambda_\Upsilon^*:=\bigcup_{\sigma\in\Omega_{l(\Upsilon)}}\Lambda_\Upsilon(\sigma).
\end{eqnarray*}
Assuming (1)-(3), by \cite[lemma 2.2]{zhu:14}, for every $\sigma\in\Omega^*$ and $\omega\in\Phi^*$, we have
\begin{eqnarray}\label{gg2}
\mu(f_\sigma(K))=p_\sigma,\;\mu(f_\sigma(C_\omega))=p_0p_\sigma t_\omega.
\end{eqnarray}
We will work with the following notations (cf. \cite{zhu:13}):
\begin{eqnarray*}
&&\underline{\pi}_r:=\min\big\{\min_{1\leq i\leq N}p_is_i^r,\min_{1\leq i\leq M}t_ic_i^r\big\},\;\overline{\pi}_r:=\max\big\{\max_{1\leq i\leq N}p_is_i^r,\max_{1\leq i\leq M}t_ic_i^r\big\};\\&&\Gamma_{k,r}:=\{\sigma\in\Omega^*:p_{\sigma^-}c_{\sigma^-}^r\geq\underline{\pi}_r^k>p_\sigma c_\sigma^r\};\;\;\widetilde{l}_{1k}:=\min_{\sigma\in\Gamma_{k,r}}|\sigma|,\;\widetilde{l}_{2k}:=\max_{\sigma\in\Gamma_{k,r}}|\sigma|;
\\&&\Gamma_{k,r}(\sigma):=\{\rho\in\Phi^*:p_\sigma c_\sigma^rt_{\rho^-}c_{\rho^-}^r\geq\underline{\pi}_r^k>p_\sigma c_\sigma^rt_\rho c_\rho^r\},\;|\sigma|\leq l_{1k}-1;\\&&\Psi_{k,r}:=\bigcup_{h=0}^{\widetilde{l}_{1k}-1}\Omega_h\cup\Lambda^*_{\Gamma_{k,r}};\;\phi_{k,r}:={\rm card}(\Gamma_{k,r})+{\rm card}(\Psi_{k,r});\;\widetilde{Q}_{k,r}:=\phi_{k,r}^{\frac{r}{\xi_r}}e^r_{\phi_{k,r},r}(\mu).
\end{eqnarray*}
By \cite[Lemmas 3.3, 4.10]{zhu:14}, for an ISM  $\mu$ in Case (ii), we have
\begin{eqnarray}\label{zsg11}
e^r_{\phi_{k,r},r}(\mu)\asymp \widetilde{\lambda}_{k,r}:=\sum_{\sigma\in\Psi_{k,r}}\sum_{\rho\in\Gamma_{k,r}(\sigma)}(p_\sigma s_\sigma^r t_\rho c_\rho^r)^{\frac{\xi_r}{\xi_r+r}}+\sum_{\sigma\in\Gamma_{k,r}}(p_\sigma s_\sigma^r)^{\frac{\xi_r}{\xi_r+r}}.
\end{eqnarray}

\emph{Proof of Theorem \ref{mthm2}: Case (ii)}

Let $\widetilde{H}$  be the smallest integer such that
$\big(\max_{1\leq i\leq N}p_is_i^r\big)^{\widetilde{H}}<\underline{\pi}_r$.
Write
\begin{eqnarray}\label{gg777}
\Lambda(\rho):=\{\omega\in\Omega^*:p_\sigma c_\sigma^rt_{\rho^-}c_{\rho^-}^r\geq\underline{\pi}_r^k>p_\sigma c_\sigma^rt_\rho c_\rho^r\},\;\rho\in\Phi^*.
\end{eqnarray}
As in Case (i), there exists some finite maximal antichain $\widetilde{\Upsilon}(\rho)$ in $\Omega^*$ such that (\ref{gg1}) and (\ref{estimate01}) hold with $\Lambda(\rho),\widetilde{\Upsilon}(\rho),\widetilde{H}$ in place of $\Gamma(\rho),\Upsilon(\rho),H$. We have
\[
\Psi_{k,r}=\bigcup_{h=1}^{l_{1k}-1}\Omega_h\cup\Lambda^*_{\Gamma_{k,r}}\subset\bigcup_{\rho\in\Phi^*}\Lambda(\rho).
\]
By the hypothesis, we have, $a(\xi_r):=\sum_{i=1}^M(t_\rho c_\rho^r)^{\frac{\xi_r}{\xi_r+r}}<1$. It follows that
\begin{eqnarray}\label{z2}
\widetilde{\lambda}_{k,r}&=&\sum_{\sigma\in\Psi_{k,r}}\sum_{\rho\in\Gamma_{k,r}(\sigma)}(p_\sigma s_\sigma^r t_\rho c_\rho^r)^{\frac{\xi_r}{\xi_r+r}}+\sum_{\sigma\in\Gamma_{k,r}}(p_\sigma s_\sigma^r)^{\frac{\xi_r}{\xi_r+r}}\nonumber\\&\leq&\sum_{\rho\in\Phi^*}\sum_{\sigma\in\Lambda(\rho)}(p_\sigma s_\sigma^r t_\rho c_\rho^r)^{\frac{\xi_r}{\xi_r+r}}+1\nonumber\\&\leq&(1+\widetilde{H})\sum_{\rho\in\Phi^*}(t_\rho c_\rho^r)^{\frac{\xi_r}{\xi_r+r}}+1=\frac{(1+\widetilde{H})a(\xi_r)}{1-a(\xi_r)}+1.
\end{eqnarray}
Using (\ref{zsg11}) and the proof of Lemma \ref{z1}, one show that
$\widetilde{Q}_{k,r}\asymp \widetilde{\lambda}_{k,r}^{\frac{\xi_r+r}{\xi_r}}$.
By \cite[Lemma 3.2]{zhu:14}, we have $\phi_{k,r}\asymp\phi_{k+1,r}$.
Hence, the theorem follows by (\ref{z2}).

\subsection{Proof of Theorem \ref{mthm3}}

 Let $\mu$ be an ISM in Case (i). Write
\[
a(s):=\sum_{i=1}^N(t_i s_i^r)^{\frac{s}{s+r}},\;\;b(s):=\sum_{i=1}^N(p_i s_i^r)^{\frac{s}{s+r}};\;\;s>0.
\]
By \cite[Lemma 3.4]{zhu:13}, for all $s\geq\xi_r$, we have
\begin{eqnarray}\label{zz1}
\sum_{\sigma\in\Lambda_{k,r}}h(\sigma)^{\frac{s}{s+r}}\leq\sum_{h=0}^{l_{1k}-1}a(s)^{l_{1k}-h}b(s)^{h}+\sum_{h=l_{1k}}^{l_{2k}}b(s)^h+b(s)^{l_{1k}}
\end{eqnarray}
By the hypothesis, $\xi_{1,r}=\xi_{2,r}$. Thus, we have
\[
\xi_r=\xi_{1,r}=\xi_{2,r};\;a(\xi_r)=b(\xi_r)=1.
\]
This, together with (\ref{zz1}), yields
\begin{eqnarray}\label{zz2}
\lambda_{k,r}=\sum_{\sigma\in\Lambda_{k,r}}h(\sigma)^{\frac{\xi_r}{\xi_r+r}}\leq l_{2k}+2.
\end{eqnarray}
On the other hand, it is proved in  \cite[Lemma 3.5]{zhu:13} that
\begin{eqnarray}\label{zz3}
\lambda_{k,r}=\sum_{\sigma\in\Lambda_{k,r}}h(\sigma)^{\frac{\xi_r}{\xi_r+r}}\geq (p_0l_{1k})^{\frac{\xi_r}{\xi_r+r}}.
\end{eqnarray}
Combining (\ref{zz1}), (\ref{zz2}) and the proof of Lemma \ref{z1}, we deduce
\begin{eqnarray}\label{ss1}
e^r_{N_{k,r},r}(\mu)\left\{ \begin{array}{ll}
\lesssim \lambda_{k,r}^{\frac{\xi_r+r}{\xi_r}}N_{k,r}^{-\frac{r}{\xi_r}}\leq l_{2k}^{\frac{\xi_r+r}{\xi_r}}N_{k,r}^{-\frac{r}{\xi_r}}\\
\gtrsim N_{k,r}^{-\frac{r}{\xi_r}}\lambda_{k,r}^{\frac{\xi_r+r}{\xi_r}}\gtrsim l_{1k}N_{k,r}^{-\frac{r}{\xi_r}}
\end{array}\right..
\end{eqnarray}
Also, by (\ref{zz1}), (\ref{zz2}) and the definition of $\Gamma_{k,r}$ , we have
\begin{eqnarray*}
l_{1k}\leq N_{k,r}\cdot(k^{-1}\underline{\eta}_r)^{\frac{\xi_r}{\xi_r+r}}\leq (l_{2k}+2)\cdot\underline{\eta}_r^{-\frac{\xi_r}{\xi_r+r}}\lesssim l_{2k}.
\end{eqnarray*}
Hence, $l_{1k}(k\underline{\eta}_r^{-1})^{\frac{\xi_r}{\xi_r+r}}\lesssim N_{k,r}\lesssim l_{2k}(k\underline{\eta}_r^{-1})^{\frac{\xi_r}{\xi_r+r}}$. By the definition of $l_{1k}$ and $l_{2k}$,
\begin{eqnarray*}
\underline{\eta}_r^{l_{1k}}<k^{-1}\underline{\eta}_r;\;(\overline{\eta}_r)^{l_{2k}-1}\geq k^{-1}\underline{\eta}_r^2.
\end{eqnarray*}
Hence, $l_{1k},l_{2k}\asymp \log k$. Combining the above analysis, we obtain
\begin{eqnarray*}
\log N_{k,r}\asymp \log \log k+\frac{\xi_r}{\xi_r+r}\log k\asymp \log k.
\end{eqnarray*}
 This, together with (\ref{ss1}), yields
\begin{eqnarray}\label{zz4}
N_{k,r}^{-\frac{r}{\xi_r}}\log N_{k,r}\lesssim e^r_{N_{k,r},r}(\mu)\lesssim N_{k,r}^{-\frac{r}{\xi_r}}(\log N_{k,r})^{\frac{\xi_r+r}{\xi_r}}.
\end{eqnarray}
As we noted \cite[Lemma 3.6]{zhu:13}, we have, $N_{k+1,r}\leq N N_{k,r}$. For every $n$, there exists a unique $k$ such that $N_{k,r}\leq n< N_{k+1,r}$. Hence,
\begin{eqnarray*}\label{zz4}
e^r_{n,r}(\mu)\left\{ \begin{array}{ll}
\leq e^r_{N_{k,r},r}(\mu)\lesssim N_{k,r}^{-\frac{r}{\xi_r}}(\log N_{k,r})^{\frac{\xi_r+r}{\xi_r}}\lesssim n^{-\frac{r}{\xi_r}}(\log n)^{\frac{\xi_r+r}{\xi_r}}\\
\geq e^r_{N_{k+1,r},r}(\mu)\gtrsim N_{k+1,r}^{-\frac{r}{\xi_r}}\log N_{k+1,r}\gtrsim n^{-\frac{r}{\xi_r}}\log n
\end{array}\right..
\end{eqnarray*}

 Next, let $\mu$ be an ISM in Case (ii). Note that $\sum_{\sigma\in\Gamma_{k,r}}(p_\sigma s_\sigma^r)^{\frac{\xi_r}{\xi_r+r}}=1$. By the proof of \cite[Lemma 3.5]{zhu:14}, we have
\begin{eqnarray}\label{ss2}
\widetilde{l}_{1k}\leq \widetilde{\lambda}_{k,r}&=&\sum_{h=0}^{\widetilde{l}_{1k}-1}\sum_{\sigma\in\Omega_h}(p_\sigma s_\sigma^r)^{\frac{\xi_r}{\xi_r+r}}+1+\sum_{\sigma\in\Lambda_{\Gamma_{k,r}}^*}(p_\sigma s_\sigma^r)^{\frac{\xi_r}{\xi_r+r}}\nonumber\\
&\leq&\sum_{h=0}^{\widetilde{l}_{1k}-1}a(\xi_r)^h+1+\sum_{h=\widetilde{l}_{1k}}^{\widetilde{l}_{2k}-1}a(\xi_r)^h=\widetilde{l}_{2k}+2.
\end{eqnarray}
By the definition of $\Gamma_{k,r}$ and $\Gamma_{k,r}(\sigma)$, we deduce
\begin{eqnarray*}
\widetilde{l}_{1k}\leq\phi_{k,r}\cdot\underline{\pi}_r^{\frac{k\xi_r}{\xi_r+r}}\leq (\widetilde{l}_{2k}+2)\cdot \underline{\pi}_r^{-\frac{\xi_r}{\xi_r+r}}\asymp \widetilde{l}_{2k}.
\end{eqnarray*}
From this, we further deduce
\begin{eqnarray}\label{gg66}
\widetilde{l}_{1k}\underline{\pi}_r^{-\frac{k\xi_r}{\xi_r+r}}\lesssim\phi_{k,r}\lesssim \widetilde{l}_{2k}\underline{\pi}_r^{-\frac{k\xi_r}{\xi_r+r}}.
 \end{eqnarray}
 By the definition of $\widetilde{l}_{1k}$ and $\widetilde{l}_{2k}$, one can see that
\begin{eqnarray}\label{gg77}
\underline{\pi}_r^{\widetilde{l}_{1k}}<\underline{\pi}_r^k;\;\;\overline{\pi}_r^{\widetilde{l}_{2k}-1}\geq\underline{\pi}_r^k;\;\;{\rm implying}\;\;\widetilde{l}_{1k},\widetilde{l}_{2k}\asymp k.
\end{eqnarray}
Combining (\ref{gg66}) and (\ref{gg77}), we obtain that
$\log\phi_{k,r}\asymp k$. As is pointed out in the proof for Theorem \ref{mthm2} Case (ii), we have
\[
\widetilde{Q}_{k,r}=\phi_{k,r}^{\frac{r}{\xi_r}}e^r_{\phi_{k,r},r}(\mu)\asymp \widetilde{\lambda}_{k,r}^{\frac{\xi_r+r}{\xi_r}}.
\]
 Using this, and (\ref{ss2}), we get the following estimate:
\[
e^r_{\phi_{k,r},r}(\mu)\asymp\phi_{k,r}^{-\frac{r}{\xi_r}}\widetilde{\lambda}_{k,r}^{\frac{\xi_r+r}{\xi_r}}\asymp\phi_{k,r}^{-\frac{r}{\xi_r}}k^{\frac{\xi_r+r}{\xi_r}}\asymp\phi_{k,r}^{-\frac{r}{\xi_r}}(\log\phi_{k,r})^{\frac{\xi_r+r}{\xi_r}}.
\]
Since $\phi_{k,r}\asymp\phi_{k+1,r}$, as we did in the proof for ISMs in Case (i), by reducing the sequence $(n)_{n=1}^\infty$ to $(\phi_{k,r})_{k=1}^\infty$, one can  obtain (\ref{sanguo}).

\section{Examples and remarks}
\begin{example}\label{eg1}{\rm
We consider the following similitudes on $\mathbb{R}^1$:
\[
f_1(x)=\frac{1}{8}x,\;\;f_2(x)=\frac{1}{8}x+\frac{7}{8}.
\]
Let $(p_i)_{i=0}^2$ be a probability vector satisfying
\[
0<p_0<\delta_0:=1-\exp\big(-2^{-1}(\log 9-\log 8)\big),\;p_1=p_2=2^{-1}(1-p_0).
\]
Let $(t_1,t_2)=(1/3,2/3)$ and let $\mu$ be the ISM in Case (i). Then for large $r$, we have
\begin{eqnarray}\label{gg5}
D_r(\mu)=\xi_r=\xi_{2,r}>\xi_{1,r};\;0<\underline{Q}_r^{\xi_r}(\mu)\leq\overline{Q}_r^{\xi_r}(\mu)<\infty.
\end{eqnarray}
In fact, as pointed out in (14.17) of \cite{GL:00}, by implicit differentiation, we have
\[
\xi'_{1,r}(r)=\frac{\xi_{1,r}}{r}\frac{\sum_{i=1}^2(t_is_i^r)^{\frac{\xi_{1,r}}{\xi_{1,r}+r}}(\log t_i-\xi_{1,r}\log s_i)}{\sum_{i=1}^2(t_is_i^r)^{\frac{\xi_{1,r}}{\xi_{1,r}+r}}(\log t_i+r\log s_i)}.
\]
By \cite{GL:00}, $\xi_{1,r}\to \log 2/\log 8$ as $r\to\infty$. Hence, by L'Hopital's rule, we deduce
\begin{eqnarray*}
&&\lim_{r\to\infty}\bigg(\frac{r}{\xi_{1,r}}\log 2-r\log 8\bigg)
=\lim_{r\to\infty}\frac{\xi'_{1,r}(r)\log 2}{\xi_{1,r}^2r^{-2}}\\&&=\lim_{r\to\infty}\frac{\log 2\sum_{i=1}^2(t_is_i^r)^{\frac{\xi_{1,r}}{\xi_{1,r}+r}}(\log t_i-\xi_{1,r}\log s_i)}{\xi_{1,r}\sum_{i=1}^2(t_is_i^r)^{\frac{\xi_{1,r}}{\xi_{1,r}+r}}(r^{-1}\log t_i+\log s_i)}
\\&&=\frac{1}{2}(\log 9-\log 8).
\end{eqnarray*}
It follows that $2^{-\frac{r}{\xi_{1,r}}}8^{r}\to 1-\delta_0$. Thus, for sufficiently large $r$, we have
\[
p_0<1-2^{-\frac{r}{\xi_{1,r}}}8^{r}\; \;{\rm and}\;\;1-p_0>2^{-\frac{r}{\xi_{1,r}}}8^{r}.
\]
 As in  \cite[Remark 1.4]{zhu:13}, for large $r$, we have
\[
\sum_{i=1}^2(p_is_i^r)^{\frac{\xi_{1,r}}{\xi_{1,r}+r}}=2(2^{-1}(1-p_0)s_i^r)^{\frac{\xi_{1,r}}{\xi_{1,r}+r}}>1.
\]
This implies that $\xi_{1,r}<\xi_{2,r}$ for all large $r$. By Theorem \ref{mthm2}, (\ref{gg5}) follows.

}\end{example}
\begin{example}\label{eg2}{\rm
Let $(f_i)_{i=1}^2,(t_i)_{i=1}^2$ and $(p_i)_{i=0}^2$be as defined in Example \ref{eg1}. We consider the following similitudes on $\mathbb{R}^1$:
\[
g_1(x)=\frac{1}{8}x+\frac{1}{3},\;\;g_2(x)=\frac{1}{8}x+\frac{13}{24}.
\]
We denote by $\mu$ an ISM in Case (ii). One can easily see that the IOSC is satisfied for $\mu$. As is calculated in Example \ref{eg1}, we have $\xi_{1,r}<\xi_{2,r}$ for all large $r$. Hence, by Theorem \ref{mthm2}, we know that $\overline{Q}_r^{\xi_r}(\mu)$ is finite.
}\end{example}

\begin{remark}\label{r1}
By \cite[Corollary 1.2]{zhu:14}, we have $\xi_{1,r}>\xi_{2,r}$ for all sufficiently small $r>0$. In fact, for any $t>0$, we have
\[
\sum_{i=1}^N(p_is_i^r)^{\frac{t}{t+r}}\leq\bigg(\sum_{i=1}^Np_i\bigg)^{\frac{t}{t+r}}\bigg(\sum_{i=1}^Ns_i^t\bigg)^{\frac{r}{t+r}}\to(1-p_0)<1\;(r\to 0).
\]
This implies that $\xi_{2,r}\to 0$ as $r\to 0$, while by Corollary 12.16 of \cite{GL:00}, $\xi_{1,r}$ is bounded from below by the Hausdorff dimension of $\nu$, which is positive.
\end{remark}
\begin{remark}
In Examples \ref{eg1},\ref{eg2}, for all large $r$, we have $\xi_{1,r}<\xi_{2,r}$. By this, Remark \ref{r1} and the intermediate-value theorem, we deduce that there exists an $r$ such that $\xi_{1,r}=\xi_{2,r}$. So, for this $r$, we have, $Q_r^{\xi_r}(\mu)=\infty$. This is in sharp contrast to self-similar measures. Assume that $(f_i)_{i=1}^N$ satisfies the OSC. For a self-similar measure $P$ as defined in (\ref{ssm}), by \cite[Theorem 3.1]{GL:01}, the upper and lower quantization coefficient for $P$ of order $r$ are both positive and finite for all $r>0$.
\end{remark}

\end{document}